\def\R{{\mathbb R}}
\def\N{{\mathbb N}}
\newcommand{\ssi}{if and only if}
\newcommand{\sep}{\mbox{-}}
\newcommand{\ld}{\lambda}
\newcommand{\var}{\begin{array}[t]{c}+\\[-7pt]  {\scriptstyle v}\end{array}}
\newcommand{\tos}{\rightrightarrows}
\newcommand{\Dom}{{\rm Dom} \kern.15em}
\newcommand{\ps}{\smallbreak}
\newcommand{\la}{\langle}
\newcommand{\ra}{\rangle}
\newtheorem{theo}{Theorem}[section]
\newtheorem{lemma}[theo]{Lemma}
\newtheorem{prop}[theo]{Proposition}
\newtheorem{cor}[theo]{Corollary}
\newcounter{ex}
\renewcommand{\theex}{\arabic{ex}}
\begin{document}
\thispagestyle{empty}
\begin{abstract}
We show that the lower limit of a sequence of  maximal monotone operators
on a reflexive Banach space is a representable monotone operator. As a consequence, we obtain that the variational
sum of maximal monotone operators and the variational composition of a maximal monotone operator
with a linear continuous operator are both representable monotone operators.
\end{abstract}
\title[Limits of sequences of maximal monotone operators]%
{Representable monotone operators and limits of sequences of maximal monotone operators}
\author{Yboon Garc\'{\i}a}
\address{IMCA and Facultad de Ciencias de la Universidad Nacional de Ingenier\'{i}a, Lima, Peru}
\email{yboon@imca.edu.pe}
\author{Marc Lassonde}
\address{LAMIA, Universit\'e des Antilles et de la Guyane,
  97159 Pointe \`a Pitre, France}
\email{marc.lassonde@univ-ag.fr}
%%%%%%%%%%%%%%%%%%%%%%%%%%%%%%%%%%%%%%%%%%%%%%%%%%%%%%%%%%%%%%%%%%%%%%%%

\date{December 17, 2010}

\subjclass{Primary 47H05; Secondary 49J52, 46N10}%47N10

\keywords{Monotone operator, Fitzpatrick function, variational sum, sequence of operators}

\maketitle
\section{Introduction}
\setcounter{equation}{0}
In recent years, the utilization of Fitzpatrick functions in the study of monotone operators
gave rise to a new class of monotone operators, the so-called \textit{representable} operators.
These are operators $T: X\tos X^*$ from a Banach space $X$ into its dual $X^*$
whose graphs can be described via a convex lower semicontinuous function $f$ defined on $X\times X^*$,
namely:
$T=\{(x,x^*) \in X\times X^* : f(x,x^*)= \langle x^*, x \rangle\}$.
Maximal monotone operators are representable, but not all representable operators
are maximal monotone. However, it turns out that representable operators
possess properties similar to those
of maximal monotone operators, and that representable operators with full-space domain
are actually maximal monotone (see Section 2). Thus, in situations where maximal monotonicity
is lacking or not known, it is interesting to know whether representability is present.
Three such situations are studied in this paper: the lower limit of sequences of maximal monotone
operators (Section 3), the variational sum of two maximal monotone operators (Section 4)
and the variational composition of a maximal monotone operator with a linear continuous map (Section 5).
\ps
Let $T_n:X\tos X^*$ be a sequence of maximal monotone operators
between a reflexive strictly convex Banach space $X$ and its strictly convex dual $X^*$.
It is known that the strong (graph-)lower limit of such a sequence, denoted $\liminf\,T_n$,
is monotone but not maximal monotone in general.
Here we show that it is however representable (Theorem \ref{theo1}).
On the other hand, it is known that in finite-dimensional spaces,
the so-called Painlev\'e-Kuratowski (graph-)limit of the $T_n$'s is indeed maximal monotone
(Attouch's theorem \cite{Att79,ABT94}).
We extend this theorem to the case of arbitrary reflexive strictly convex Banach spaces,
replacing, as usual, Painlev\'e-Kuratowski convergence by Mosco convergence (Theorem \ref{theo2}).
\ps
Next, we consider the \textit{variational sum} of two maximal monotone operators
$T_1,T_2$. This concept was introduced by Attouch-Baillon-Th\'era \cite{ABT94} in Hilbert spaces
as a substitute for the usual (Minkowski) sum $T_1+T_2$ which in general does not yield a
maximal monotone operator.
It is given by
$$
T_1 \var T_2 := \bigcap_\mathcal{I}\liminf_n\,(T_{1, \lambda_n}+ T_{2,\mu_n}),
$$
where
$
\mathcal{I}=\Bigl\{ \{(\lambda_n, \mu_n)\}\subset \R^2: \lambda_n,\, \mu_n \geq 0, \,
\lambda_n+ \mu_n >0, \lambda_n, \mu_n \to 0 \Bigr\},
$
and where $T_{1, \lambda_n}$ and $T_{2,\mu_n}$ denote the Yosida regularizations of $T_1$ and $T_2$
respectively.
The operators $T_n=T_{1, \lambda_n}+ T_{2,\mu_n}$ are maximal monotone, so
as a consequence of our previous result on the lower limit of sequences of maximal monotone operators,
we derive that the variational sum is actually a representable extension of the usual sum $T_1+T_2$.
\ps
Analog results for \textit{variational composition} are also obtained in the last section.
%%%%%%%%%%%%%%%%%%%%%%%%%%%%%%%%%%%%%%%%%%%%

\section{Maximal monotone and representable operators}
\setcounter{equation}{0}
Set-valued mappings $T:X\tos Y$ between sets $X$ and $Y$
are identified with their graphs $T\subset X\times Y$,
so $x^*\in Tx$ is equally written as $(x,x^*)\in T$.
The \textit{values} of $T:X\tos Y$ are the subsets $Tx\subset Y$
for $x\in X$, the \textit{inverse} of $T$ is the mapping
$T^{-1}: Y \tos X$ defined by
$
T^{-1}y= \bigl\{x \in X: y \in Tx\bigr\},
$
and the domain of $T$ is the projection of (the graph of) $T$ onto $X$, that is,
$\Dom T=\{ x\in X : Tx\ne \emptyset\,\}$.

\ps
In what follows, $X$ denotes a Banach space, $X^*$ its continuous dual, $B_{X^*}$ the unit ball in $X^*$,
and $X\times X^*$ is equipped with the strong$\times$weak-star ($s\times w^*$) topology.
Recall that a set-valued operator $T:X\tos X^*$, or a subset $T\subset X\times X^*$,
is said to be
\ps
$\bullet$ {\it monotone} provided
$
\langle y^*-x^*,y-x\rangle \geq 0,\  \forall (x,x^*),(y,y^*)\in T,
$
\ps
$\bullet$ {\em maximal monotone} provided it is monotone and
%not contained properly in another monotone operator, that is:
%\centerline{$
%T\subset S,\quad S:X\tos X^* \mbox{ monotone } \Longrightarrow T=S,
%$}
%or, said differently, provided the set $T$ is
maximal (under set inclusion) in the family of all monotone sets contained in $X\times X^*$,
\ps
$\bullet$ {\em representable} provided there is a lower semicontinuous
convex function $f: X\times X^* \to \mathbb{R}\cup\{+\infty\}$ such that
$$
\left\lbrace\begin{array}{l}
f(x,x^*)\ge \langle x^*, x \rangle,\quad \forall \,(x,x^*) \in X \times X^*,\label{equ11}\smallskip\\
f(x,x^*)= \langle x^*, x \rangle \Leftrightarrow (x,x^*) \in T.\label{equ12}
\end{array}\right.
$$
%It can be checked that
Every representable operator is indeed
monotone (see, e.g., Penot-Z{\u{a}}linescu \cite{PZ05}).
\ps
Using the notations of Mart\'\i nez-Legaz-Svaiter \cite{M-LS05},
$$\mathcal{F}=
\{ f: X\times X^* \to \mathbb{R}\cup\{+\infty\}
\mbox{ lower semicontinuous convex} : f(x,x^*)\ge \langle x^*, x \rangle\ \forall \,(x,x^*)\},
$$
%\smallskip\\
and, given $f\in\mathcal{F}$, %let
$$L(f)=\{(x,x^*) \in X\times X^* : f(x,x^*)= \langle x^*, x \rangle\},$$
%\medskip\\
we can write more synthetically:
\begin{equation}\label{defrep}
T \mbox{ representable }\Longleftrightarrow \exists f\in \mathcal{F} \,:\, T=L(f).
\end{equation}

\medbreak
For a nonempty $T:X\tos X^*$, consider $\varphi_T,\varphi_T^*:X\times X^* \to \mathbb{R}\cup\{+\infty\}$ given by
$$
\left\lbrace\begin{array}{l}
\varphi_T(x,x^*)  = \sup\,\{\,\langle y^*, x\rangle -\langle y^*,y\rangle+\langle x^*, y\rangle : (y,y^*)\in T\,\},
\medskip\\
\varphi_T^*(x,x^*)= \sup\,\{\,\langle (y,y^*), (x,x^*) \rangle-\varphi_T(y,y^*) : (y,y^*) \in X\times X^*\,\}.
\end{array}\right.
$$
Obviously, $\varphi_T$ and $\varphi_T^*$ are convex and lower semicontinuous in $X\times X^*$ supplied with
the strong$\times$weak-star topology. Both functions were considered by Fitzpatrick \cite{Fit88}.
%in his seminal paper.
Since then, they have been recognized to be quite useful
in the study of monotone operators
(see, e.g., \cite{BWY09,BWY10,Bor06,M-LS05,Pen04,Sim08,SZ05,Sva03,Voi06,VZ10}).
The following proposition shows examples of %such characterizations.
classifications of monotone operators using these functions.
\begin{prop} [see, e.g., \cite{M-LS05,Pen04,Voi06}]\label{propmono}
Let $X$ be a Banach space and let $T:X\tos X^*$ with $\Dom T\ne\emptyset$. Then:
\begin{enumerate}
\item $T$ monotone $\Longleftrightarrow$ $\varphi_T^*\in \mathcal{F}$ and $T\subset L(\varphi_T^*)$.
\item $T$ representable $\Longleftrightarrow$ $\varphi_T^*\in \mathcal{F}$ and $T=L(\varphi_T^*)$.
\item $T$ maximal monotone $\Longleftrightarrow$ $\varphi_T\in \mathcal{F}$ and $T=L(\varphi_T^*)$.
\end{enumerate}
\end{prop}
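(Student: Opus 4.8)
The plan is to reduce everything to the duality product $q(x,x^*):=\la x^*,x\ra$ and to the function $g_T:=q+\delta_T$ (equal to $\la x^*,x\ra$ on $T$ and $+\infty$ off $T$). Since $\varphi_T$ and $\varphi_T^*$ are already lower semicontinuous and convex, membership in $\mathcal F$ amounts to the single inequality ``$\ge q$''. Computing the conjugate with respect to the pairing $\la(y,y^*),(x,x^*)\ra=\la x^*,y\ra+\la y^*,x\ra$, one checks directly that $\varphi_T=g_T^*$, whence by Fenchel--Moreau $\varphi_T^*=g_T^{**}=\overline{\co}\,g_T$, the largest lsc convex minorant of $g_T$. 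Two consequences will be used throughout: first, $\varphi_T^*\le g_T$, so $\varphi_T^*\le q$ on $T$; second, for any $f\in\mathcal F$ the set $L(f)$ is monotone, because if $f(a,a^*)=q(a,a^*)$ and $f(b,b^*)=q(b,b^*)$, then convexity of $f$ at the midpoint together with $f\ge q$ gives, after expansion, $\tfrac14\la a^*-b^*,a-b\ra\ge0$.

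For (1), the implication $(\Leftarrow)$ is then free: if $\varphi_T^*\in\mathcal F$ then $L(\varphi_T^*)$ is monotone, and $T\subset L(\varphi_T^*)$ forces $T$ monotone. For $(\Rightarrow)$ I would exploit $\varphi_T^*=\overline{\co}\,g_T$: at a point $(x,x^*)$ written as a finite convex combination $\sum_i\lambda_i(z_i,z_i^*)$ of elements of $T$, the quantity $\sum_i\lambda_i q(z_i,z_i^*)-q(x,x^*)$ symmetrizes to $\tfrac12\sum_{i,j}\lambda_i\lambda_j\la z_i^*-z_j^*,z_i-z_j\ra$, which is $\ge0$ by monotonicity; passing to the infimum and then to the lsc hull (using continuity of $q$ on bounded sets) yields $\varphi_T^*\ge q$, i.e. $\varphi_T^*\in\mathcal F$. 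Combined with $\varphi_T^*\le q$ on $T$, this also gives $T\subset L(\varphi_T^*)$. This symmetrization is the first place where genuine work is needed.

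Part (2) is then short. $(\Leftarrow)$ is just the definition of representability with $f=\varphi_T^*$. For $(\Rightarrow)$, if $T=L(f)$ with $f\in\mathcal F$ then $T$ is monotone (by the $L(f)$-monotone fact), so (1) gives $\varphi_T^*\in\mathcal F$ and $T\subset L(\varphi_T^*)$. For the reverse inclusion, note $f\le g_T$ (they agree on $T$, and $f\le+\infty$ off $T$); since $f$ is lsc convex and $\varphi_T^*=\overline{\co}\,g_T$ is the largest such minorant, $f\le\varphi_T^*$. Hence on $L(\varphi_T^*)$ one has $q=\varphi_T^*\ge f\ge q$, so $f=q$ there, i.e. $L(\varphi_T^*)\subset L(f)=T$, giving equality.

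Finally (3). For $(\Rightarrow)$: maximal monotonicity gives monotonicity, so by (1) $\varphi_T^*\in\mathcal F$ and $T\subset L(\varphi_T^*)$; since $L(\varphi_T^*)$ is monotone and contains $T$, maximality forces $T=L(\varphi_T^*)$, while $\varphi_T\in\mathcal F$ is Fitzpatrick's theorem (for $(x,x^*)\notin T$ maximality yields $(z,z^*)\in T$ with $\la x^*-z^*,x-z\ra<0$, whence $\varphi_T(x,x^*)>q(x,x^*)$; on $T$ one has $\varphi_T=q$). The converse $(\Leftarrow)$ is the main obstacle. The idea is to first extract monotonicity/representability of $T$ from the two hypotheses and then to upgrade to maximality using $\varphi_T\in\mathcal F$: under $\varphi_T\ge q$ the identity $\varphi_T(x,x^*)=q(x,x^*)-\inf_{(z,z^*)\in T}\la x^*-z^*,x-z\ra$ shows that the monotone polar $T^\mu=\{(x,x^*):\la x^*-z^*,x-z\ra\ge0\ \forall(z,z^*)\in T\}$ coincides with $\{\varphi_T\le q\}=L(\varphi_T)$, so that $T=T^\mu$ (hence $T$ maximal monotone) follows once $L(\varphi_T^*)$ is identified with $L(\varphi_T)$ for such $T$. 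Establishing this coincidence — equivalently, deriving the monotonicity of $T$ from the combination of $\varphi_T\in\mathcal F$ and $T=L(\varphi_T^*)$ — is the delicate point of the whole proposition, and is exactly where Fitzpatrick's representation must be invoked in full.
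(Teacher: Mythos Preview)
The paper does not give a proof of this proposition: it is stated with the references \cite{M-LS05,Pen04,Voi06} and used as a black box. So there is no ``paper's own proof'' to compare against; your attempt must be judged against those references.

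Your framework---setting $g_T=q+\delta_T$, identifying $\varphi_T=g_T^*$ and $\varphi_T^*=g_T^{**}=\overline{\co}\,g_T$, and using the midpoint argument to show that $L(f)$ is monotone for $f\in\mathcal F$---is exactly the Mart\'{\i}nez-Legaz--Svaiter route, and your treatment of (1) and (2) is correct. One technical caveat in (1)($\Rightarrow$): the phrase ``passing to the lsc hull (using continuity of $q$ on bounded sets)'' hides a genuine issue, because the biconjugate is the closure for the weak topology of the pairing and $q$ is \emph{not} $w\times w^*$-lsc. The clean fix is to note that $\varphi_T^*$, being a conjugate on the Banach space $X\times X^*$, is also the \emph{norm}-lsc convex hull of $g_T$ (Mazur), and $q$ is norm-continuous; then your symmetrization inequality $\co g_T\ge q$ passes to the closure.

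The real gap is (3)($\Leftarrow$), and you flag it honestly. But identifying ``$L(\varphi_T^*)=L(\varphi_T)$'' as the missing step, and saying ``Fitzpatrick's representation must be invoked in full'', is not a proof. Note that $\varphi_T\in\mathcal F$ alone does \emph{not} force $T$ to be monotone (take $T=X\times X^*$: then $\varphi_T\equiv+\infty\in\mathcal F$), so the second hypothesis $T=L(\varphi_T^*)$ must be used in an essential way. A route that works and that you have almost set up: from $\varphi_T\in\mathcal F$ you get that $T^\mu=L(\varphi_T)$ is itself representable (by $\varphi_T$), hence monotone; since $T\subset T^{\mu}$ would give $T$ monotone, what is needed is the implication ``representable $+$ NI $\Rightarrow$ maximal monotone'' applied first to $T^\mu$ (which is NI because $\varphi_{T^\mu}\ge\varphi_T\ge q$) to obtain $T^\mu=T^{\mu\mu}\supset T$, and then to $T$ itself. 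That implication is precisely what the cited references supply; without it your argument for (3)($\Leftarrow$) remains a sketch.
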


\noindent
{\it Example.}
The operator $T=\{(0,0)\}\subset \R\times\R$ (considered in Fitzpatrick \cite{Fit88})
is representable but not maximal monotone. Easy computation shows that
$\varphi_T(x,x^*)=0$ for every $(x,x^*)$, while
$\varphi_T^*(x,x^*)=\delta_T(x,x^*)$ for every $(x,x^*)$, where
$\delta_T$, the \textit{indicator function} of (the graph of) $T$, is equal to $0$ on $T$,
to $+\infty$ outside.
Hence $\varphi_T\not\in\mathcal{F}$, $\varphi_T^*\in\mathcal{F}$, and $T=L(\varphi_T^*)$.
Any linear map is a maximal monotone extension of $T$, and $T$ is equal to the intersection of all
its maximal monotone extensions.
\ps
The next proposition gives elementary properties of representable operators.

\begin{prop}\label{improp}
Let $X$ be a Banach space and let $T:X\tos X^*$ be representable. Then:
\begin{enumerate}
\item  $T$ and $T^{-1}$ have convex values.
\item  For any $n\in\N$, $T\cap (X\times nB_{X^*})$ is $s\times w^*$-closed in $X\times X^*$.
 \end{enumerate}
\end{prop}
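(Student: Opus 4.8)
The plan is to fix once and for all a representing function $f\in\mathcal{F}$ with $T=L(f)$ and to exploit two features of it repeatedly: that $f$ is convex, and that $f(x,x^*)\ge\langle x^*,x\rangle$ everywhere, so that membership in $T$ is equivalent to the single inequality $f(x,x^*)\le\langle x^*,x\rangle$. For the closedness statement I will need $f$ to be lower semicontinuous for the $s\times w^*$ topology; to secure this I would simply take $f=\varphi_T^*$, which still represents $T$ by Proposition \ref{propmono}(2) and which is $s\times w^*$-lsc by construction.

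For part (1) I would fix $x$, pick $x_1^*,x_2^*\in Tx$, and show every convex combination $x_t^*=(1-t)x_1^*+tx_2^*$ lies in $Tx$. The idea is that along the segment from $(x,x_1^*)$ to $(x,x_2^*)$ the first coordinate is frozen, so the coupling $y^*\mapsto\langle y^*,x\rangle$ is affine there; convexity of $f$ then bounds $f(x,x_t^*)$ above by $\langle x_t^*,x\rangle$, while the defining inequality of $\mathcal{F}$ bounds it below by the same number, forcing equality and hence $x_t^*\in Tx$. The statement for $T^{-1}$ is the mirror image obtained by freezing $x^*$ and varying $x$, with an identical one-line computation. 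This part is routine and presents no real obstacle.

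The substance is in part (2), and the main obstacle is exactly that the coupling $(x,x^*)\mapsto\langle x^*,x\rangle$ is \emph{not} jointly $s\times w^*$-continuous on all of $X\times X^*$: a $w^*$-convergent net in $X^*$ need not be norm-bounded, so the cross term resists control. This is precisely why the statement intersects with $X\times nB_{X^*}$. I would take an $s\times w^*$-convergent net $(x_\alpha,x_\alpha^*)\to(x,x^*)$ lying in $T\cap(X\times nB_{X^*})$ and argue in three steps. First, $\|x^*\|\le n$, since $nB_{X^*}$ is $w^*$-closed by Banach--Alaoglu (equivalently the norm is $w^*$-lsc). Second, on the bounded slab $X\times nB_{X^*}$ the coupling \emph{is} $s\times w^*$-continuous: writing $\langle x_\alpha^*,x_\alpha\rangle-\langle x^*,x\rangle=\langle x_\alpha^*,x_\alpha-x\rangle+\langle x_\alpha^*-x^*,x\rangle$, the first term is dominated by $n\,\|x_\alpha-x\|\to0$ and the second vanishes by $w^*$-convergence. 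Third, feeding the equality $f(x_\alpha,x_\alpha^*)=\langle x_\alpha^*,x_\alpha\rangle$ into the $s\times w^*$-lower semicontinuity of $f$ yields $f(x,x^*)\le\lim_\alpha\langle x_\alpha^*,x_\alpha\rangle=\langle x^*,x\rangle$, hence $(x,x^*)\in T$. Thus the limit stays in $T\cap(X\times nB_{X^*})$, giving the desired $s\times w^*$-closedness.
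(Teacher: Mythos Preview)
Your argument is correct and is essentially the paper's own proof: the paper dismisses part (1) as well known, and for part (2) it likewise chooses the representative $f=\varphi_T^*$, uses boundedness of $\{x_\nu^*\}$ to pass the coupling to the limit, and then invokes the $s\times w^*$-lower semicontinuity of $\varphi_T^*$ together with $\varphi_T^*\in\mathcal{F}$ to conclude $(x,x^*)\in L(\varphi_T^*)=T$. The only cosmetic difference is that you spell out the splitting $\langle x_\alpha^*,x_\alpha\rangle-\langle x^*,x\rangle=\langle x_\alpha^*,x_\alpha-x\rangle+\langle x_\alpha^*-x^*,x\rangle$, whereas the paper simply asserts the convergence.
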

\begin{proof}
(1) is well known and obvious. For (2), consider a net $\{(x_\nu, x^*_\nu)\}\subset T$, with
$\{x^*_\nu\}\subset nB_{X^*}$, which $s\times w^*$-converges to $(x,x^*)$.
Clearly, $x^*$ belongs to $nB_{X^*}$. It remains to show that $(x,x^*)\in T$.
By Proposition \ref{propmono}\,(2),
$T=L(\varphi_T^*)$, so
$
\varphi_T^*(x_\nu, x^*_\nu)=\la x^*_\nu,x_\nu\ra.
$
Since $\{x^*_\nu\}$ is bounded, we have $\la x^*_\nu,x_\nu\ra\to \la x^*,x\ra$, hence,
from the $s\times w^*$-lower semicontinuity of $\varphi_T^*$ we derive that
$\varphi_T^*(x, x^*)\le \la x^*,x \ra$. Since $\varphi_T^*\in \mathcal{F}$,
the reverse inequality also holds, so $(x, x^*)\in L(\varphi_T^*)=T$.
\end{proof}

A monotone operator $T:X\tos X^*$ is said to
be \textit{maximal monotone in $\Omega\subset X$} provided the monotone set
$
T\cap (\Omega\times X^*)
$
is maximal (under set inclusion) in the family of all monotone sets contained in
$\Omega\times X^*$.
A set-valued mapping $T:Z\tos Y$ between topological
spaces $Z$ and $Y$ is said to be {\em upper semicontinuous}
at $z\in Z$ provided for any open $V$ containing
$Tz$ there is an open neighborhood $U$ of $z$
such that $T(U) \subset V$. When $Y$ is a linear topological space
with topology $\tau$, an upper semicontinuous $T:Z\tos Y$
with nonempty $\tau$-compact convex values is called {\em $\tau$-cusco},
and a $\tau$-cusco $T:Z\tos Y$ is called {\em minimal $\tau$-cusco}
if its graph does not contain the graph of any other $\tau$-cusco
between $Z$ and $Y$.
\ps
The next theorem exhibits % points out
a case where representable and maximal monotone operators coincide.

\begin{theo} \label{usc}
Let $X$ be a Banach space and let $T:X\tos X^*$ be representable
with $\Omega={\rm int\,}\Dom T\ne \emptyset$.
Then, $T$ is maximal monotone in $\Omega$.
In particular, $T$ is minimal $w^*$-cusco in $\Omega$.
\end{theo}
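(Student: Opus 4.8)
The plan is to derive everything from the structural facts already available for representable operators, upgraded by the local boundedness of monotone operators. Since $T$ is representable, Proposition~\ref{propmono}(2) gives $T=L(\varphi_T^*)$ with $\varphi_T^*\in\mathcal F$, so $T$ is monotone; moreover, by Proposition~\ref{improp}, $T$ has convex values and each truncation $T\cap(X\times nB_{X^*})$ is $s\times w^*$-closed. The first genuine ingredient I would invoke is Rockafellar's local boundedness theorem: a monotone operator is locally bounded at every point of $\core(\co\,\Dom T)$, hence at every point of $\Omega=\operatorname{int}\Dom T$. Thus for each $x_0\in\Omega$ there are a neighborhood $U_0$ of $x_0$ and an integer $n$ with $T(U_0)\subset nB_{X^*}$.

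The main obstacle, and the step I would carry out most carefully, is assembling the $w^*$-cusco structure on $\Omega$, because it requires upgrading the merely local closedness of the graph to genuine $w^*$-upper semicontinuity, and this is exactly where boundedness is essential. Fixing $x_0\in\Omega$ with $U_0,n$ as above, on $U_0$ the graph of $T$ coincides with $T\cap(U_0\times nB_{X^*})$, which is $s\times w^*$-closed by Proposition~\ref{improp}(2); since $nB_{X^*}$ is $w^*$-compact by Banach--Alaoglu, a standard closed-graph-into-a-compact-range argument yields that $T$ is $w^*$-upper semicontinuous at $x_0$. The value $Tx_0$ is nonempty ($x_0\in\Dom T$), convex (Proposition~\ref{improp}(1)), and, being a bounded $w^*$-closed slice of the graph, $w^*$-compact. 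Hence $T$ restricted to $\Omega$ is a $w^*$-cusco.

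Maximal monotonicity in $\Omega$ then follows from a separation argument. Suppose $x_0\in\Omega$ and $x_0^*$ is monotonically related to $T\cap(\Omega\times X^*)$, but $x_0^*\notin Tx_0$. As $Tx_0$ is $w^*$-compact convex and $(X^*,w^*)^*=X$, Hahn--Banach provides $h\in X$ with $\la z^*,h\ra<\la x_0^*,h\ra$ for all $z^*\in Tx_0$; the $w^*$-open half-space $V=\{z^*:\la z^*,h\ra<\la x_0^*,h\ra\}$ contains $Tx_0$, so upper semicontinuity gives a neighborhood $U$ with $T(U)\subset V$. Choosing $t>0$ small enough that $y_t:=x_0+th\in U\cap\Omega$ and picking $y_t^*\in Ty_t$, the inclusion $y_t^*\in V$ reads $\la y_t^*,h\ra<\la x_0^*,h\ra$, whereas monotonicity forces $\la y_t^*-x_0^*,th\ra\ge 0$, that is $\la y_t^*,h\ra\ge\la x_0^*,h\ra$; the contradiction shows $x_0^*\in Tx_0$.

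For the minimality assertion, let $S\subset T$ be any $w^*$-cusco on $\Omega$ and fix $x_0\in\Omega$; I claim $Sx_0=Tx_0$. If some $x_0^*\in Tx_0\setminus Sx_0$, then separating the $w^*$-compact convex set $Sx_0$ from $x_0^*$ by $h\in X$ and invoking the upper semicontinuity of $S$ exactly as above produces, for small $t>0$, a point $y_t^*\in Sy_t\subset Ty_t$ with $\la y_t^*,h\ra<\la x_0^*,h\ra$; since both $(x_0,x_0^*)$ and $(y_t,y_t^*)$ lie in the monotone set $T$, monotonicity again gives $\la y_t^*,h\ra\ge\la x_0^*,h\ra$, a contradiction. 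Hence $S=T$ on $\Omega$, so $T$ is minimal $w^*$-cusco in $\Omega$. Apart from the $w^*$-usc upgrade, the only point demanding attention is aligning the perturbed point $x_0+th$ with the separating direction $h$ so that monotonicity produces the reverse inequality in both arguments.
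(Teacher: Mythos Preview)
Your proof is correct and follows essentially the same route as the paper: both establish $w^*$-upper semicontinuity on $\Omega$ by combining local boundedness of monotone operators with the $s\times w^*$-closedness of the bounded slices of the graph (Proposition~\ref{improp}(2)), and then conclude maximal monotonicity and minimal $w^*$-cusco from the resulting cusco structure. The only difference is that where the paper cites \cite[Lemma~7.7 and Theorem~7.9]{Phe93} for the final step, you spell out the underlying Hahn--Banach separation and directional perturbation argument explicitly; this makes your version self-contained but does not change the strategy.
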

\begin{proof}
We claim that $T$ is strong-to-weak$^*$ upper semicontinuous
at every point in $\Omega$.
%Let $x\in\Omega$.
Indeed, suppose $T$ is not strong-to-weak$^*$ upper semicontinuous
at some $x\in\Omega$.
Then, there exists an $w^*$-open set $V$ containing $Tx$ and sequences
$\{x_n\}\subset X$ and $\{x^*_n\}\subset X^*$ such that
\begin{equation}\label{canto}
x_n\to x\quad\mbox{and}\quad \forall n\in\N,\,x^*_n\in Tx_n\setminus V.
\end{equation}
Since $T$ is locally bounded at $x$ (see, e.g., Phelps \cite[Theorem 2.28]{Phe93}),
the sequence $\{x^*_n\}$ is bounded. Hence, it admits a bounded subnet $\{x^*_\alpha\}$
$w^*$-converging to a certain $x^*$. Thus, %by (\ref{canto}),
for some $n\in\N$,
the net $\{(x_\alpha,x^*_\alpha)\}$ is contained in $T\cap (X\times nB_{X^*})$ and
$s\times w^*$-converges to $(x,x^*)$. From Proposition \ref{improp},
we conclude that $(x,x^*)\in T$.
On the other hand, (\ref{canto}) implies that $x^*_\alpha\not\in V$ for every $\alpha$, hence
$x^*\not\in V$. This is a contradiction because $x^*\in Tx\subset V$. %The claim is proved.

Thus, $T:\Omega\tos  X^*$ is monotone and strong-to-weak$^*$ upper semicontinuous
with nonempty $w^*$-closed convex values (by Proposition \ref{improp}) on the open set
$\Omega$. Therefore, $T$ is maximal monotone and minimal $w^*$-cusco in $\Omega$
(see, e.g., \cite[Lemma 7.7 and Theorem 7.9]{Phe93}).
\end{proof}

Let $T$ be a non-empty monotone operator.
By Proposition \ref{propmono}\,(1) and (\ref{defrep}),
$L(\varphi_T^*)$ is a representable extension of $T$.
Actually, $L(\varphi_T^*)$ is the smallest representable extension of $T$, that is,
$L(\varphi_T^*)$ is equal to the intersection of all the representable extensions of $T$.
In finite-dimensional spaces, a more precise result holds:
$L(\varphi_T^*)$ is equal to the intersection of all the maximal monotone extensions of $T$
(see Mart\'\i nez-Legaz-Svaiter \cite{M-LS05} for the details).
\ps
In the sequel, we shall use the following notation for a non-empty monotone operator $T$: we let
$
T^0=\{\,(x,x^*)\in X\times X^* : \langle y^*-x^*,y-x\rangle \geq 0,\  \forall (y,y^*)\in T\,\}
$
and
$
\mathcal{M}(T)=\{ S\subset X\times X^* : S \mbox{ maximal monotone},\, T\subset S\}.
$
Then
$T^0=\bigcup \{ S : S\in \mathcal{M}(T)\}$
and
$T^{00}=\bigcap \{ S : S\in \mathcal{M}(T)\}$, so that $T\subset T^{00} \subset T^0$.
We have:
\begin{enumerate}
\item  $T=T^0$ \ssi\ $T$ is maximal monotone,
\item  $T=T^{00}$ \ssi\ $T$ is the intersection of all its maximal monotone extensions,
\item  $T^0=T^{00}$ \ssi\ $T$ has a unique maximal monotone extension.
\end{enumerate}
%(see, e.g., Mart\'\i nez-Legaz-Svaiter \cite{M-LS05}).

\section{Limits of sequences of maximal monotone operators}
\setcounter{equation}{0}
%We recall some definition of limits of sequences of operators, see \cite{Atfrench} for more details.
Let $\{T_n\} \subset X \times Y$ be a sequence of operators between topological spaces
$(X, \tau )$ and $(Y, \beta)$.
The \textit{sequential lower limit} of $\{T_n\}$,
w.r.t.\ the product topology $\tau \times \beta$, is the operator
$$
\tau \times \beta\mbox{-}\liminf T_n=
\{\tau \times \beta\mbox{-}\lim_n (x_n,y_n): (x_n,y_n) \in T_n,\, \mbox{for all } n \in \N \},
$$
while its \textit{sequential upper limit} is the operator
$$
\tau \times \beta\mbox{-}\limsup T_n=
\{\tau \times \beta\mbox{-}\lim_k (x_{n_k},y_{n_k}): (x_{n_k},y_{n_k}) \in T_{n_k},
\mbox{for an infinite } \{{n_k}\}\subset\N \}.
$$

Let now $\{T_n\}\subset X \times X^*$ be a sequence from a reflexive Banach space to its dual.
Denote by $s$ and $w$ respectively the strong and weak topologies in $X$ and $X^*$.
Then, $\{T_n\}$ is said to \textit{converge in the sense of Painlev\'e-Kuratowski}
to the operator $T$, written $T= \mbox{PK-}\lim T_n$, if
$$
s \times s\mbox{-}\limsup T_n \subset T\subset s \times s\mbox{-}\liminf T_n,
$$
while $\{T_n\}$ is said to \textit{converge in the sense of Mosco} to $T$,
written $T= \mbox{M-}\lim T_n$, if
$$
w \times w\mbox{-}\limsup T_n \subset T\subset s \times s\mbox{-}\liminf T_n.
$$
In the sequel, we simply write $\liminf T_n$
instead of $s \times s\mbox{-}\liminf T_n$.
\ps From now on, we assume that $X$ is a reflexive Banach space, with a strictly convex
norm in $X$ and a strictly convex dual norm in $X^*$
(always possible thanks to Asplund \cite{As1}).
With such norms, the {\it duality mapping} $J:X\tos X^*$ given by
$$
Jx:= \bigl\{x^*\in X^*: \langle x^*, x \rangle = \|x\|^2 = \|x^*\|^2\bigr\}
$$
is single-valued, bijective and maximal monotone,
and it is well known (see, e.g., Rockafellar \cite{Ro1}) that
a monotone operator $T:X\tos X^*$ is maximal monotone \ssi\
the operator $J+T$ is surjective. Moreover, in that case, the operator
$(J+T)^{-1}$ is single-valued on $X^*$.
According to this result, to any maximal monotone $T:X\tos X^*$ and $\ld>0$,
we associate its \textit{resolvent} $J_\lambda^T: X\to X$ which to
each $x\in X$ assigns the unique solution $x_\lambda\in\Dom T$
of the inclusion $0\in J(x_\lambda-x) + \lambda Tx_\lambda$, and
its \textit{Yosida regularization}
$T_\lambda : X\tos X^*$ given by $T_\lambda x = J(x-x_\lambda)/\lambda$. Then,
$T_\lambda$ is a single-valued maximal monotone operator of all of $X$ into $X^*$,
and satisfies $T_\lambda x\in Tx_\lambda$.
\ps
Let now $\{T_n: X \tos X^*\}$ be a sequence of maximal monotone operators.
For $(x,x^*) \in X \times X^*$ and $n\in \N$,
%we denote by $\psi_n(x,x^*)=x_n$
we consider the unique solution $x_n=J_{T_n}(x,x^*)$ of the inclusion
\begin{equation} \label{reussi1}
 x^* \in J(x_n-x) + T_n(x_n).
\end{equation}
%The following lemma is a variant of \cite[Lemma 3.1]{Gar09}.
\begin{lemma}\label{lemma1}
Let $X$ be reflexive strictly convex with a strictly convex dual $X^*$ and
let $\{T_n: X \tos X^*\}$ be a sequence of maximal monotone operators.
Set $T=\liminf T_n$ and assume $\Dom T\ne \emptyset$.
Then, for any $(x,x^*) \in X \times X^*$,
the sequence $\{x_n\}$ of solutions of (\ref{reussi1}) is bounded,
and for any subsequence $\{x_{n_k}\}$ with $x_{n_k} \rightharpoonup \overline{x}$ and
$J(x_{n_k}-x)\rightharpoonup \eta^*$, we have
\begin{equation}\label{reussi2}
\langle x^*- \eta^* - y^*, \overline{x} - y \rangle + \langle \eta^*, \overline{x} - x\rangle
\geq \limsup \|x_{n_k} - x\|^2, \quad \forall (y,y^*)\in T.
\end{equation}
In particular, $(\overline{x},x^*- \eta^*)\in T^0$.
\end{lemma}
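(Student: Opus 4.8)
The plan is to work throughout with the pairs $(x_n,x_n^*)\in T_n$ defined by $x_n^*:=x^*-J(x_n-x)$, which is merely a rewriting of (\ref{reussi1}) and lets me insert $x_n$ into the monotonicity inequalities of $T_n$.

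\emph{Boundedness.} Since $\Dom T\ne\emptyset$, fix $(a,a^*)\in T$; by the definition of $T=\liminf T_n$ there are $(a_n,a_n^*)\in T_n$ with $a_n\to a$ and $a_n^*\to a^*$ strongly. Monotonicity of $T_n$ applied to $(x_n,x_n^*)$ and $(a_n,a_n^*)$ gives $\langle x_n^*-a_n^*,x_n-a_n\rangle\ge 0$; substituting $x_n^*$ and isolating $\|x_n-x\|^2=\langle J(x_n-x),x_n-x\rangle$ (together with $\|J(x_n-x)\|=\|x_n-x\|$) produces an inequality of the form $\|x_n-x\|^2\le A_n\|x_n-x\|+B_n$, where $A_n,B_n$ stay bounded because $\{a_n\},\{a_n^*\}$ converge. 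The usual quadratic estimate then bounds $\{x_n-x\}$, hence $\{x_n\}$.

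\emph{The inequality (\ref{reussi2}).} Fix $(y,y^*)\in T$ and, using $T=\liminf T_n$ once more, choose $(y_n,y_n^*)\in T_n$ with $y_n\to y$, $y_n^*\to y^*$ strongly. For the given subsequence, monotonicity of $T_{n_k}$ against $(x_{n_k},x_{n_k}^*)$ yields $\langle x^*-J(x_{n_k}-x)-y_{n_k}^*,\,x_{n_k}-y_{n_k}\rangle\ge 0$. Writing $x_{n_k}-y_{n_k}=(x_{n_k}-x)+(x-y_{n_k})$ to extract the term $\|x_{n_k}-x\|^2$, this rearranges to $\|x_{n_k}-x\|^2\le\langle x^*-y_{n_k}^*,x_{n_k}-y_{n_k}\rangle-\langle J(x_{n_k}-x),x-y_{n_k}\rangle$. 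Now each pairing on the right couples a strongly convergent factor with a weakly convergent one (recall $x_{n_k}\rightharpoonup\overline{x}$, $J(x_{n_k}-x)\rightharpoonup\eta^*$, while $y_{n_k}\to y$, $y_{n_k}^*\to y^*$ strongly), so the right-hand side converges to $\langle x^*-y^*,\overline{x}-y\rangle-\langle\eta^*,x-y\rangle$; taking $\limsup$ on the left and rearranging this limit algebraically gives exactly (\ref{reussi2}). The only point requiring care is to group the terms so that every cross product is of strong$\times$weak type; the subsequent algebra is routine.

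\emph{Membership in $T^0$.} From (\ref{reussi2}) and $\limsup\|x_{n_k}-x\|^2\ge 0$ one only gets $\langle x^*-\eta^*-y^*,\overline{x}-y\rangle\ge-\langle\eta^*,\overline{x}-x\rangle$, so the real obstacle is to show $\langle\eta^*,\overline{x}-x\rangle\le\limsup\|x_{n_k}-x\|^2$. I expect to obtain this from monotonicity of $J$: for every $w\in X$, $\langle J(x_{n_k}-x)-J(w),(x_{n_k}-x)-w\rangle\ge 0$, and since $J(x_{n_k}-x)\rightharpoonup\eta^*$ and $x_{n_k}-x\rightharpoonup\overline{x}-x$ the only nonconvergent term is $\langle J(x_{n_k}-x),x_{n_k}-x\rangle=\|x_{n_k}-x\|^2$; passing to the limit gives $\liminf\|x_{n_k}-x\|^2\ge\langle\eta^*,w\rangle+\langle J(w),\overline{x}-x-w\rangle$, and the choice $w=\overline{x}-x$ collapses the right-hand side to $\langle\eta^*,\overline{x}-x\rangle$. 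Hence $\langle\eta^*,\overline{x}-x\rangle\le\liminf\|x_{n_k}-x\|^2\le\limsup\|x_{n_k}-x\|^2$, and combining this with (\ref{reussi2}) yields $\langle x^*-\eta^*-y^*,\overline{x}-y\rangle\ge 0$ for all $(y,y^*)\in T$, which is precisely $(\overline{x},x^*-\eta^*)\in T^0$.
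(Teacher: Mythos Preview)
Your proof is correct and follows essentially the same route as the paper: monotonicity of $T_n$ against an approximating sequence $(y_n,y_n^*)\in T_n$ to get boundedness and the key inequality (\ref{reussi2}), then monotonicity of $J$ (with your $w=\overline{x}-x$, which is exactly the comparison point the paper uses) to bound $\langle\eta^*,\overline{x}-x\rangle$ by $\liminf\|x_{n_k}-x\|^2$ and conclude $(\overline{x},x^*-\eta^*)\in T^0$. The only cosmetic differences are that you introduce a separate base point $(a,a^*)$ for the boundedness step and first write the $J$-monotonicity with a general $w$ before specializing.
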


\begin{proof}
Let $(y,y^*) \in T$. By definition of $T$, there exists a sequence $\{(y_n, y^*_n)\}$
such that $(y_n, y^*_n) \to (y,y^*)$ and $(y_n, y^*_n)\in T_n$ for each $n \in \N$.
Using  the monotonicity of $T_n$  and (\ref{reussi1}) we get
$$
\bigl\langle x^*-J(x_n -x)  - y_n^*, x_n - y_n\bigr\rangle \geq  0,
$$
hence
\begin{equation}\label{reussi4}
\langle x^*  - y_n^*, x_n - y_n\rangle \geq  \|x_n -x\|^2 + \langle J(x_n -x) , x - y_n\rangle.
\end{equation}
Since the sequences $\{y_n\}$ and $\{y^*_n\}$ are bounded, from (\ref{reussi4}) we deduce that
$\{x_n\}$ is bounded.
\ps
Let $\{x_{n_k}\}$ be a subsequence such that $x_{n_k} \rightharpoonup \overline{x}$
and $J(x_{n_k}-x)\rightharpoonup \eta^*$.
Taking the limit in (\ref{reussi4}) we obtain
$$
\langle x^*  - y^*, \overline{x} - y\rangle  \geq  \displaystyle\limsup\|x_{n_k} -x\|^2 + \langle \eta^* , x - y\rangle,
$$
which clearly gives (\ref{reussi2}).
\ps
On the other hand, the monotonicity of $J$ implies
$$
\bigl\langle J(x_{n_k} - x) - J(\overline{x} - x), (x_{n_k}-x) - (\overline{x}-x)\bigr\rangle \geq 0,
$$
that is,
$$
\|x_{n_k} - x\|^2  \geq  \bigl\langle J(x_{n_k} - x), \overline{x}-x \bigr\rangle -
\bigl\langle J(\overline{x} - x), x_n -\overline{x}\bigr\rangle,
$$
so passing to the limit we get
$$
\liminf\|x_{n_k} - x\|^2  \geq  \langle \eta^*, \overline{x}-x \rangle.
$$
Combining  this inequality with (\ref{reussi2}) we derive that
$$
\langle x^*- \eta^* - y^*, \overline{x} - y \rangle\ge 0, \quad \forall (y,y^*)\in T,
$$
proving that $(\overline{x},x^*- \eta^*)\in T^0$.
\end{proof}

The following proposition provides a convenient description of the set
$\liminf T_n$ in terms
of limits of sequences of solutions of (\ref{reussi1}).
\begin{prop}\label{prop1}
Let $X$ be reflexive strictly convex with a strictly convex dual $X^*$.
Let $\{T_n: X \tos X^*\}$ be a sequence of maximal monotone operators.
Then, $(x,x^*)\in \liminf T_n$ \ssi\ $x=\lim x_n$,
where $\{x_n\}$ is the sequence of solutions of
equation (\ref{reussi1}).
\end{prop}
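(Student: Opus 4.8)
The plan is to prove the two implications separately; the reverse implication is immediate, while the direct one carries the content.

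For the easy implication ($\Leftarrow$), suppose $x_n\to x$ strongly. Rewriting the defining inclusion (\ref{reussi1}) as $x^*-J(x_n-x)\in T_n(x_n)$, I would take the pairs $(x_n,x^*-J(x_n-x))\in T_n$ as witnesses for the lower limit. Since $J$ preserves norms, $\|J(x_n-x)\|=\|x_n-x\|\to 0$, so $x^*-J(x_n-x)\to x^*$ strongly while $x_n\to x$ strongly; hence $(x_n,x^*-J(x_n-x))\to(x,x^*)$ in the $s\times s$ topology, which gives $(x,x^*)\in\liminf T_n$.

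For the main implication ($\Rightarrow$), suppose $(x,x^*)\in\liminf T_n$ and pick $(a_n,a_n^*)\in T_n$ with $(a_n,a_n^*)\to(x,x^*)$ strongly. The key step is to apply the monotonicity of $T_n$ to the two pairs $(x_n,x^*-J(x_n-x))$ and $(a_n,a_n^*)$, both lying in $T_n$, which yields
$$
\la x^*-J(x_n-x)-a_n^*,\,x_n-a_n\ra\ge 0.
$$
Splitting $x_n-a_n=(x_n-x)+(x-a_n)$ and using $\la J(x_n-x),x_n-x\ra=\|x_n-x\|^2$, I would rearrange this into
$$
\|x_n-x\|^2\le \la x^*-a_n^*,\,x_n-a_n\ra-\la J(x_n-x),\,x-a_n\ra,
$$
after which it remains only to show that both terms on the right tend to $0$.

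The main obstacle is controlling the sequence $\{x_n\}$, which appears inside the inner products. Here I would invoke Lemma \ref{lemma1}: since $T=\liminf T_n$ contains $(x,x^*)$, we have $\Dom T\ne\emptyset$, so $\{x_n\}$ is bounded, and hence so are $\{J(x_n-x)\}$ and $\{x_n-a_n\}$. With these bounds, the first term tends to $0$ because $x^*-a_n^*\to 0$ strongly against a bounded factor, and the second tends to $0$ because $x-a_n\to 0$ strongly against the bounded sequence $\{J(x_n-x)\}$; thus $\|x_n-x\|^2\to 0$, i.e.\ $x_n\to x$ strongly. Alternatively, one can bypass Lemma \ref{lemma1} by observing that the displayed inequality is self-bounding: writing $t_n=\|x_n-x\|$, $\alpha_n=\|x^*-a_n^*\|$ and $\beta_n=\|x-a_n\|$, it gives $t_n^2\le(\alpha_n+\beta_n)t_n+\alpha_n\beta_n$, a quadratic estimate that already forces $t_n\to 0$.
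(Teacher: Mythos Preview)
Your proof is correct. The reverse implication is exactly the paper's argument, and your forward implication uses the same core monotonicity inequality as the paper (in fact your displayed bound is precisely inequality (\ref{reussi4}) from the proof of Lemma \ref{lemma1}, specialized to the witness sequence $(a_n,a_n^*)\to(x,x^*)$).

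The execution differs slightly. The paper routes through Lemma \ref{lemma1} in full: it extracts a weakly convergent subsequence $x_{n_k}\rightharpoonup\bar x$, $J(x_{n_k}-x)\rightharpoonup\eta^*$, passes to the limit to obtain inequality (\ref{reussi2}), and then specializes (\ref{reussi2}) at $(y,y^*)=(x,x^*)$ to get $0\ge\limsup\|x_{n_k}-x\|^2$. You instead stay at the pre-limit level with the specific pair $(a_n,a_n^*)$, so no weak subsequence extraction is needed and the conclusion $\|x_n-x\|\to 0$ follows for the whole sequence directly. Your alternative quadratic estimate $t_n^2\le(\alpha_n+\beta_n)t_n+\alpha_n\beta_n$ is a genuine simplification: it bypasses Lemma \ref{lemma1} entirely and makes the proposition self-contained. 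The paper's route, on the other hand, is more modular, since Lemma \ref{lemma1} and inequality (\ref{reussi2}) are reused verbatim in Theorems \ref{theo1} and \ref{theo2}.
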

\begin{proof}
Let $(x,x^*)\in \liminf T_n$. Consider
the sequence $\{x_n\}$ of solutions of (\ref{reussi1}).
By Lemma \ref{lemma1}, any subsequence of $\{x_n\}$ admits a converging
subsequence $\{x_{n_k}\}$ which satisfies (\ref{reussi2}).
Using (\ref{reussi2}) with $(y, y^*) = (x, x^* )$, we get
$
0\geq \limsup \|x_{n_k} - x\|^2,
$
that is, $x_{n_k} \to x$. We conclude that the whole sequence $\{x_{n}\}$ converges to $x$.
\ps
Conversely, by (\ref{reussi1}), $(x_n, x^*-J(x_n-x))\in T_n$ for every $n\in \N$.
If $x_n\to x$, then $x^*-J(x_n-x)\to x^*$, so $(x,x^*)\in \liminf T_n$
by definition of $\liminf T_n$.
\end{proof}

We now turn to our main result in this section.
\begin{theo}\label{theo1}
Let $X$ be reflexive strictly convex with a strictly convex dual $X^*$ and
let $\{T_n: X \tos X^*\}$ be a sequence of maximal monotone operators.
\ps {\rm(1)} $T=\liminf T_n$ is representable;
\ps {\rm(2)} If the duality mapping $J$ is weakly sequentially continuous
(as when $X$ is a Hilbert space or $\ell_p$, $1<p<\infty$) and
$T=w\times w\sep\liminf T_n$, then $T$
is the intersection of all its maximal monotone extensions.
\end{theo}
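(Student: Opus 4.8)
The plan for~(1) is to show that $\varphi_T^*$ itself is a representing function. As the strong lower limit of monotone sets, $T=\liminf T_n$ is monotone, so Proposition~\ref{propmono}(1) gives $\varphi_T^*\in\mathcal F$ and $T\subset L(\varphi_T^*)$, and by Proposition~\ref{propmono}(2) it remains only to prove the reverse inclusion $L(\varphi_T^*)\subset T$. To this end I would fix $(x,x^*)\in L(\varphi_T^*)$, let $\{x_n\}$ be the solutions of~(\ref{reussi1}) attached to this pair, and show $x_n\to x$; Proposition~\ref{prop1} then yields $(x,x^*)\in T$.

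By Lemma~\ref{lemma1} the sequence $\{x_n\}$ is bounded, so by reflexivity any subsequence has a further subsequence with $x_{n_k}\rightharpoonup\overline x$ and $J(x_{n_k}-x)\rightharpoonup\eta^*$, for which~(\ref{reussi2}) holds. Rewriting~(\ref{reussi2}) in the equivalent form $\limsup\|x_{n_k}-x\|^2\le\la x^*-y^*,\overline x-y\ra+\la\eta^*,y-x\ra$ for every $(y,y^*)\in T$ and taking the infimum over $T$, the right-hand side collapses, by the very definition of $\varphi_T$, to $\la x^*,\overline x\ra-\la\eta^*,x\ra-\varphi_T(\overline x,x^*-\eta^*)$. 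On the other hand the hypothesis $(x,x^*)\in L(\varphi_T^*)$ reads $\varphi_T^*(x,x^*)\le\la x^*,x\ra$, which by definition of the conjugate is precisely the family of inequalities $\varphi_T(u,u^*)\ge\la x^*,u-x\ra+\la u^*,x\ra$ for all $(u,u^*)$; evaluating at $(u,u^*)=(\overline x,x^*-\eta^*)$ gives exactly $\varphi_T(\overline x,x^*-\eta^*)\ge\la x^*,\overline x\ra-\la\eta^*,x\ra$. The two bounds force $\limsup\|x_{n_k}-x\|^2\le 0$, hence $x_{n_k}\to x$; since this holds along a further subsequence of every subsequence, the whole sequence converges to $x$ and $(x,x^*)\in T$. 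The step to get right here is matching the infimum of the right-hand side of~(\ref{reussi2}) with the conjugate inequality that defines $L(\varphi_T^*)$; once these are aligned the proof closes at once.

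For~(2) write $T=w\times w\sep\liminf T_n$ and $T_s=\liminf T_n$, so that $T_s\subset T$ and $T_s$ is monotone by~(1). I would prove the sharper equality $T=T_s^{00}$; once it is known, a maximal monotone operator contains $T$ \ssi\ it contains $T_s$, whence $\mathcal M(T)=\mathcal M(T_s)$ and $T^{00}=T_s^{00}=T$, which is the assertion. The inclusion $T_s^{00}\subset T$ is the clean one: for $(x,x^*)\in T_s^{00}$ run the resolvent~(\ref{reussi1}) again; weak sequential continuity of $J$ now forces $\eta^*=J(\overline x-x)$, and Lemma~\ref{lemma1} places $(\overline x,x^*-J(\overline x-x))$ in some $S\in\mathcal M(T_s)$. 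Since $(x,x^*)\in T_s^{00}\subset S$ as well, monotonicity of $S$ gives $\la J(\overline x-x),x-\overline x\ra\ge 0$, i.e.\ $\|\overline x-x\|^2\le 0$, so $\overline x=x$. Hence $x_n\rightharpoonup x$, and because $x^*-J(x_n-x)\rightharpoonup x^*$ (again by weak continuity of $J$ and $J(0)=0$), we obtain $(x,x^*)\in w\times w\sep\liminf T_n=T$.

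The genuine difficulty is the reverse inclusion $T\subset T_s^{00}$ — equivalently, that $T$ is monotone, so that $T^{00}$ is meaningful and $T\subset T^{00}$. A point of $T$ carries pairs $(u_n,u_n^*)\in T_n$ converging only weakly, so the pairing $\la u_n^*,u_n\ra$ need not pass to the limit; this weak$\times$weak obstruction is the heart of the section and is exactly what the strong lower limit circumvented in~(1), where the witnesses converge strongly. I expect to overcome it by inserting the weakly convergent data into the monotonicity inequality of $T_n$ tested against the resolvent, and then exploiting, beyond weak sequential continuity of $J$, the monotonicity of $J$ itself: the sign $\la Ja-Jb,a-b\ra\ge 0$ cancels the uncontrolled cross terms and restores $\overline x=x$. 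Carried out, this yields a weak analogue of Proposition~\ref{prop1} — membership in $T$ being equivalent to weak convergence of the resolvent solutions — from which both the monotonicity of $T$ and the inclusion $T\subset T_s^{00}$ follow.
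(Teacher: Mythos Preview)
Your argument for part~(1) is correct and is essentially the paper's proof: both reduce to $L(\varphi_T^*)\subset T$, both pass from inequality~(\ref{reussi2}) to a bound involving $\varphi_T(\overline x,x^*-\eta^*)$, both then invoke the defining inequality of $\varphi_T^*$ at $(\overline x,x^*-\eta^*)$ to force $\limsup\|x_{n_k}-x\|^2\le 0$, and both finish with Proposition~\ref{prop1}. The only cosmetic difference is that the paper packages the conjugate estimate as the single line~(\ref{reussi6b}), while you unwind it as a Fenchel--Young inequality; the content is identical.

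For part~(2), your ``clean'' direction is exactly what the paper does. The paper's entire proof of~(2) consists of running the resolvent, using weak sequential continuity of $J$ to get $\eta^*=J(\overline x-x)$, invoking Lemma~\ref{lemma1} to place $(\overline x,x^*-J(\overline x-x))$ in $T^0$, and then, for $(x,x^*)\in T^{00}$, deducing $-\|\overline x-x\|^2\ge 0$ and hence $(x,x^*)\in w\times w\sep\liminf T_n=T$. The paper ends with ``This shows that $T^{00}\subset T$, which was to be proved'': it treats the other inclusion $T\subset T^{00}$ as automatic and never argues that $T=w\times w\sep\liminf T_n$ is monotone.

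So the difficulty you single out --- the ``reverse inclusion'' $T\subset T_s^{00}$, equivalently the monotonicity of the weak lower limit --- is a point the paper simply does not address; it is not that the paper has an argument you are missing. Your diagnosis of the obstruction (the pairing $\la u_n^*,u_n\ra$ does not pass to the limit under weak$\times$weak convergence) is correct, but your proposed cure is not convincing as written. Monotonicity of $J$ gives $\la J(x_n-x)-J(u_n-x),x_n-u_n\ra\ge 0$, yet the resulting lower bound $\la J(u_n-x),x_n-u_n\ra$ is again a product of a weakly null sequence and a merely bounded one, so nothing is gained; the uncontrolled cross terms do not cancel. In short: on~(2) you have faithfully reproduced the paper's proof and, in addition, put your finger on a genuine gap that is present in the paper itself --- but your sketch does not close it.
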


\begin{proof}
(1) Without loss of generality, we may assume that $T$ is nonempty
(the empty set is representable).
It is well known and easily seen that $T$ is monotone, so, by Proposition \ref{propmono},
proving that $T$ is representable amounts to proving that
$L(\varphi_T^*)\subset T$.
According to Lemma \ref{lemma1}, for any $(x,x^*) \in X \times X^*$,
the sequence $\{x_n\}$ of solutions of (\ref{reussi1}) is bounded
and any subsequence $\{x_{n_k}\}$ such that $x_{n_k} \rightharpoonup \overline{x}$ and
$J(x_{n_k}-x)\rightharpoonup \eta^*$ satisfies (\ref{reussi2}).
By definition of $\varphi_T$, (\ref{reussi2}) can be rewritten as
\begin{equation}\label{reussi5}
\langle x^*- \eta^*, \overline{x} \rangle + \langle \eta^*, \overline{x} - x\rangle
\geq \limsup \|x_{n_k} - x\|^2+ \varphi_T(\overline{x}, x^*- \eta^*).
\end{equation}
Since
\begin{eqnarray*}
\varphi_T^*(x,x^*)&= &\sup\,\{\,\langle (y,y^*), (x,x^*) \rangle-\varphi_T(y,y^*) : (y,y^*) \in X\times X^*\,\}\\
    &\ge &\la (\overline{x},x^*- \eta^*), (x,x^*) \rangle-\varphi_T(\overline{x}, x^*- \eta^*)\\
    &= &\la x^*,\overline{x}\ra + \la x^*- \eta^*,x\ra -\varphi_T(\overline{x}, x^*- \eta^*),
\end{eqnarray*}
we derive from (\ref{reussi5}) that
\begin{equation}\label{reussi6b}
\varphi_T^*(x, x^*) \geq \limsup \|x_{n_k} - x\|^2+ \langle x^* , x\rangle.
\end{equation}
Now, let $(x,x^*)\in L(\varphi_T^*)$. Then (\ref{reussi6b}) reduces to
$$
0 \geq \limsup \|x_{n_k} - x\|^2,
$$
showing that  $x_{n_k} \to x$. It follows that $x_n \to x$,
since this subsequence was taken arbitrarily. From Proposition \ref{prop1}
we conclude that $(x,x^*) \in T$, as required.
\medbreak
(2) Again, without loss of generality, we may assume that $T$ is nonempty
(the empty set is equal to the intersection of all the maximal monotone sets).
Assuming $J$ weakly sequentially continuous and $T=w \times w\mbox{-}\liminf T_n$,
we show that $T^{00}=T$.
By Lemma \ref{lemma1}, for any  $(x,x^*)\in X\times X^*$, the sequence $\{x_n\}$ of solutions of (\ref{reussi1})
is bounded and if $\{x_{n_k}\}$ is any subsequence such that $x_{n_k} \rightharpoonup \overline{x}$ and
$J(x_{n_k}-x)\rightharpoonup \eta^*$, then $(\overline{x},x^*-\eta^*)\in T^0$.
In fact $\eta^*=J(\overline{x}-x)$ since $J$ is
weakly sequentially continuous,
so $(\overline{x},x^*-J(\overline{x}-x))\in T^0$.
Now, let $(x,x^*)\in T^{00}$. Then
$$
\bigl\langle x^* - J(\overline{x}-x)-x^*, \overline{x}-x\bigr\rangle\ge 0,
$$
that is,
$
-\|\overline{x}-x\|^2\ge 0.
$
Hence, $\overline{x}=x$. We derive that $x_n\rightharpoonup x$ and
$x^* - J(x_n-x)\rightharpoonup x^*$. Since
$$
\bigl(x_n, x^* - J(x_n-x)\bigr) \in  T_{n},\quad\forall n\in \N,
$$
we have $(x, x^*) \in w \times w\mbox{-}\liminf T_n$,
therefore, by assumption, $(x, x^*) \in T$.
This shows that $T^{00}\subset T$, which was to be proved.
\end{proof}

\begin{cor}
Let $X$ be a finite dimensional space and
let $\{T_n: X \tos X\}$ be a sequence of maximal monotone operators.
Then, $\liminf T_n$ is the intersection of all its maximal monotone extensions.
\end{cor}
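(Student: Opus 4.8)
The plan is to deduce this immediately from Theorem \ref{theo1}(2) by verifying that its two hypotheses hold automatically in finite dimensions. Recall that the conclusion we want, namely that $\liminf T_n$ equals the intersection of all its maximal monotone extensions, is precisely the statement $T=T^{00}$, which is exactly what part (2) of the theorem delivers. So the entire task reduces to checking the two standing assumptions of that part: that the duality mapping $J$ is weakly sequentially continuous, and that $T=w\times w\sep\liminf T_n$.

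First I would dispose of the weak-continuity hypothesis. In a finite-dimensional space the weak and strong topologies coincide, so every continuous map is trivially weakly sequentially continuous; in particular the duality mapping $J$ (which is single-valued and norm-to-norm continuous, being a homeomorphism of $X$ onto $X^*$ under the strictly convex renorming already fixed in Section 3) satisfies this condition without any further work. Second, and for the same reason, the weak and strong lower limits of the sequence $\{T_n\}$ coincide: since $w\times w$ and $s\times s$ are the same topology on $X\times X^*$ when $\dim X<\infty$, we have $w\times w\sep\liminf T_n = s\times s\sep\liminf T_n = \liminf T_n$. Thus the hypothesis $T=w\times w\sep\liminf T_n$ is automatically met when $T$ is \emph{defined} as $\liminf T_n$.

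With both hypotheses verified, I would simply invoke Theorem \ref{theo1}(2) applied to $T=\liminf T_n$ to conclude that $T$ is the intersection of all its maximal monotone extensions, which is the assertion of the corollary. I expect no genuine obstacle here: the whole content is the collapse of the weak and strong topologies in finite dimensions, which makes the two nontrivial-looking hypotheses of the theorem vacuous. The only point deserving a word of care is that the strictly convex renorming required throughout Section 3 is available in finite dimensions (indeed the Euclidean norm already works, and more generally Asplund's renorming result cited earlier applies), so that $J$ is the well-behaved single-valued bijection the theorem needs. Once that is noted, the corollary is a one-line specialization.
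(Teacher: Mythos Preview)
Your proposal is correct and follows exactly the paper's approach: the paper's proof is the single observation that in finite dimensions the weak and strong topologies coincide, whence Assertion~(2) of Theorem~\ref{theo1} applies directly. Your write-up merely unpacks the two hypotheses of that assertion explicitly, which is fine but not essentially different.
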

\begin{proof}
In finite dimensional spaces, strong and weak topologies coincide.
The result therefore follows directly from Assertion (2) in the theorem.
\end{proof}

\noindent{\it Example.}
In $\R\times\R$, let $T_n=\{0\}\times \R$ for even $n$, $T_n=\R \times \{0\}$ for odd $n$.
Then:
\begin{enumerate}
\item Each $T_n$ is maximal monotone,
\item $\liminf T_n = \{(0,0)\}$ is equal to the intersection of all its maximal monotone extensions,
but is not maximal monotone,
\item $\limsup T_n = (\{0\}\times \R) \cup (\R \times \{0\})$ is not even monotone.
\end{enumerate}

\medbreak
The next result extends Attouch's theorem (see \cite{Att79,ABT94}) asserting that
\textit{the class of maximal monotone operators on a finite dimensional space
is closed with respect to Painlev\'e-Kuratowski convergence.}

\begin{theo}\label{theo2}
Let $X$ be reflexive strictly convex with a strictly convex dual $X^*$ and
let $\{T_n: X \tos X^*\}$ be a sequence of maximal monotone operators.
% If $T=w\times w\sep\limsup T_n$, then $T$ is maximal monotone; in other words,
Then, when nonempty, the Mosco limit $\mbox{M-}\lim T_n$ is maximal monotone.
\end{theo}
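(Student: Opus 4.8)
The plan is to derive maximal monotonicity of $T$ from the surjectivity criterion recalled above: a monotone operator $S:X\tos X^*$ is maximal monotone \ssi\ the operator $J+S$ is surjective. First I would note that $T$ is monotone. Indeed, the Mosco inclusion gives $T\subset\liminf T_n$, and the strong lower limit of monotone operators is monotone, so $T$ inherits monotonicity. It then remains to prove that $J+T$ maps onto $X^*$.

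So fix an arbitrary $z^*\in X^*$; the goal is to produce $\overline{x}\in X$ with $z^*\in(J+T)\overline{x}$. I would apply Lemma \ref{lemma1} at the point $(x,x^*)=(0,z^*)$, so that the solution $x_n=J_{T_n}(0,z^*)$ of (\ref{reussi1}) satisfies $z^*\in Jx_n+T_nx_n$, that is, $(x_n,z^*-Jx_n)\in T_n$. The lemma guarantees that $\{x_n\}$ is bounded; by reflexivity I extract (after a double extraction) a subsequence with $x_{n_k}\rightharpoonup\overline{x}$ and, since $\|Jx_{n_k}\|=\|x_{n_k}\|$ stays bounded, $J(x_{n_k})\rightharpoonup\eta^*$ for some $\eta^*\in X^*$.

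The Mosco hypothesis enters here. Since $(x_{n_k},z^*-Jx_{n_k})\in T_{n_k}$ and this subsequence converges weakly$\times$weakly to $(\overline{x},z^*-\eta^*)$, the limit lies in $w\times w\sep\limsup T_n$, hence in $T$ by the definition of Mosco convergence. With $(\overline{x},z^*-\eta^*)\in T$ available, I can feed it into inequality (\ref{reussi2}) of Lemma \ref{lemma1} taken at $x=0$, $x^*=z^*$: choosing $(y,y^*)=(\overline{x},z^*-\eta^*)$ makes the first bracket vanish and leaves $\langle\eta^*,\overline{x}\rangle\ge\limsup\|x_{n_k}\|^2$. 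Combined with the reverse estimate $\liminf\|x_{n_k}\|^2\ge\langle\eta^*,\overline{x}\rangle$ already obtained inside the proof of the lemma (from the monotonicity of $J$), this pins down $\lim\|x_{n_k}\|^2=\langle\eta^*,\overline{x}\rangle$, equivalently $\lim\langle Jx_{n_k},x_{n_k}\rangle=\langle\eta^*,\overline{x}\rangle$.

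The last step identifies $\eta^*$ with $J\overline{x}$. For every $w\in X$, monotonicity of $J$ gives $\langle Jx_{n_k}-Jw,\,x_{n_k}-w\rangle\ge0$; letting $k\to\infty$ and using the product limit just obtained (which is exactly what lets me pass to the limit in the term $\langle Jx_{n_k},x_{n_k}\rangle$), I get $\langle\eta^*-Jw,\,\overline{x}-w\rangle\ge0$ for all $w\in X$, so maximality of $J$ forces $\eta^*=J\overline{x}$. Consequently $z^*-J\overline{x}\in T\overline{x}$, i.e. $z^*\in(J+T)\overline{x}$, and since $z^*$ was arbitrary, $J+T$ is surjective and $T$ is maximal monotone. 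The one delicate point, and the reason Lemma \ref{lemma1} is indispensable, is precisely the passage to the limit in the duality pairing $\langle Jx_{n_k},x_{n_k}\rangle$: neither factor converges strongly, and it is only the two matching inequalities supplied by the lemma that force the product to converge to $\langle\eta^*,\overline{x}\rangle$.
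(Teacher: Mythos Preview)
Your argument is correct. Both proofs start identically: for a given base point one solves (\ref{reussi1}), extracts a weakly convergent subsequence, and uses the Mosco inclusion $w\times w\sep\limsup T_n\subset T$ to place $(\overline{x},\,\cdot-\eta^*)$ inside $T$, then plugs this pair back into (\ref{reussi2}). The divergence is only in the endgame. The paper argues directly that $T^0\subset T$: for $(x,x^*)\in T^0$ and $(\overline{x},x^*-\eta^*)\in T$, monotonicity gives $\langle\eta^*,x-\overline{x}\rangle\ge 0$; together with (\ref{reussi2}) this forces $x_{n_k}\to x$ strongly, and Proposition~\ref{prop1} finishes. You instead specialise to $x=0$, squeeze $\lim\|x_{n_k}\|^2=\langle\eta^*,\overline{x}\rangle$ between the two inequalities, and run a Minty--Browder argument on $J$ to identify $\eta^*=J\overline{x}$, concluding via Rockafellar's surjectivity criterion. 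Your route is a step longer (the Minty trick and the appeal to maximality of $J$ are extra), but it is self-contained and never invokes Proposition~\ref{prop1}; the paper's route is shorter but leans on that characterisation of $\liminf T_n$. One small point of presentation: the inequality $\liminf\|x_{n_k}\|^2\ge\langle\eta^*,\overline{x}\rangle$ you quote is established \emph{inside} the proof of Lemma~\ref{lemma1} rather than in its statement, so in a standalone write-up you would want to reprove that one line.
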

\begin{proof}
Assume that $T=s \times s\mbox{-}\liminf T_n=w \times w\mbox{-}\limsup T_n$,
with $\Dom T\ne \emptyset$.
We show that $T^0=T$.
Invoking again Lemma \ref{lemma1}, we know that, for any $(x,x^*)\in X\times X^*$,
the sequence $\{x_n\}$ of solutions of (\ref{reussi1})
is bounded and any subsequence $\{x_{n_k}\}$ such that $x_{n_k} \rightharpoonup \overline{x}$ and
$J(x_{n_k}-x)\rightharpoonup \eta^*$ satisfies (\ref{reussi2}).
Since
$$
\bigl(x_{n_k}, x^* - J(x_{n_k}-x)\bigr) \in  T_{n_k},\quad \forall k\in\N,
$$
we have $(\overline{x}, x^* - \eta^*) \in w \times w\mbox{-}\limsup T_n$,
hence, $(\overline{x}, x^* - \eta^*) \in T$.
Using (\ref{reussi2}) with $(y, y^*) = (\overline{x}, x^* - \eta^*)$, we get
\begin{equation}\label{reussi2b}
\langle \eta^*, \overline{x} - x\rangle \geq \limsup \|x_{n_k} - x\|^2.
\end{equation}
Now, let $(x,x^*)\in T^0$. We have
$$
\langle \eta^*, x - \overline{x}\rangle = \bigl\langle x^* - (x^* - \eta^*), x - \overline{x} \bigr\rangle \geq 0,
$$
so (\ref{reussi2b}) yields
$$
0 \ge \limsup\|x_{n_k} - x\|^2.
$$
As in the proof of Assertion (1) in Theorem \ref{theo2}, we conclude that $(x,x^*) \in T$, as required.
\end{proof}

\section{Application 1: the variational sum}
\setcounter{equation}{0}
Again in this section, we are given a reflexive Banach space $X$, with strictly convex norms in $X$ and $X^*$,
so that the duality mapping $J:X\to X^*$ is single-valued, bijective and maximal monotone.

Let $T_1, T_2:X \tos X^*$ be maximal monotone.
It is well known that the point-wise Minkowski sum $T_1+T_2$ is a monotone operator,
which is not maximal in general, not even representable.
This is the reason why in recent years different ways of summing two
maximal monotone operators were considered in order to have more chances to get maximality
or at least representability.
We refer to Revalski \cite{Rev10} for a survey on generalized sums and compositions.

One such concept is the so-called \textit{variational sum}, introduced by Attouch-Baillon-Th\'era
\cite{ABT94} in Hilbert spaces, then generalized to reflexive Banach spaces by Revalski-Th\'era
\cite{RT99,RT99b}. It is defined as follows:
$$
T_1 \var T_2 := \bigcap_\mathcal{I}\liminf_n\,(T_{1, \lambda_n}+ T_{2,\mu_n}),
$$
where
$
\mathcal{I}=\Bigl\{ \{(\lambda_n, \mu_n)\}\subset \R^2: \lambda_n,\, \mu_n \geq 0, \,
\lambda_n+ \mu_n >0, \lambda_n, \mu_n \to 0 \Bigr\},
$
and $T_{1, \lambda_n}$, $T_{2,\mu_n}$ are the Yosida approximations of $T_1$ and $T_2$ respectively.
[The original definition uses topological non-sequential limit, but
it is easily seen that both definitions are equivalent.]
\ps
In this setting, the variational sum turns out to be bigger than the point-wise sum:
\begin{equation}\label{garcia}
T_1+T_2\subset T_1 \var T_2.
\end{equation}
This fact was recently established by Garc\'\i a \cite[Corollary 3.7]{Gar09}.
Also, it has been shown that the variational sum of two subdifferentials of lower semicontinuous convex
functions is the subdifferential of the sum of the two functions, hence a maximal monotone operator
(see \cite{ABT94,RT99b}). 
But the general question whether the variational sum of two arbitrary maximal monotone operators
is maximal monotone is still open.
However, we have the following:

\begin{theo}\label{varsum}
Let $X$ be reflexive strictly convex with a strictly convex dual $X^*$ and
let $T_1, T_2:X \tos X^*$ be maximal monotone.
Assume $\Dom (T_1 + T_2) \ne\emptyset$. Then, the variational sum
$T_1 \var T_2$ is a representable extension of the point-wise sum $T_1+T_2$.
\end{theo}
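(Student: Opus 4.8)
The plan is to derive the statement from Theorem \ref{theo1}(1) together with the elementary observation that representability is preserved under arbitrary intersections. The ``extension'' half is already available: inclusion (\ref{garcia}), due to Garc\'\i a, gives $T_1+T_2\subset T_1\var T_2$, and the hypothesis $\Dom(T_1+T_2)\ne\emptyset$ makes this common part nonempty. Thus the substance of the proof is to show that $T_1\var T_2$ is representable.

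First I would fix one sequence $\{(\ld_n,\mu_n)\}\in\mathcal{I}$ and argue that $\liminf_n(T_{1,\ld_n}+T_{2,\mu_n})$ is representable. Each $T_{1,\ld_n}+T_{2,\mu_n}$ is maximal monotone: since $\ld_n+\mu_n>0$, at least one summand is a genuine Yosida regularization, hence a single-valued maximal monotone operator defined on all of $X$, and the sum of a maximal monotone operator with a full-domain maximal monotone operator is again maximal monotone. Consequently $\{T_{1,\ld_n}+T_{2,\mu_n}\}_n$ is a sequence of maximal monotone operators, and Theorem \ref{theo1}(1) applies directly to yield representability of its lower limit.

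Next I would establish the stability of representability under intersection and apply it. By (\ref{defrep}), for each $\{(\ld_n,\mu_n)\}\in\mathcal{I}$ choose $f_{\ld,\mu}\in\mathcal{F}$ with $\liminf_n(T_{1,\ld_n}+T_{2,\mu_n})=L(f_{\ld,\mu})$, and put $f:=\sup_{\mathcal{I}}f_{\ld,\mu}$. A pointwise supremum of lower semicontinuous convex functions is again lower semicontinuous and convex, and $f(x,x^*)\ge f_{\ld,\mu}(x,x^*)\ge\la x^*,x\ra$ for every $(x,x^*)$, so $f\in\mathcal{F}$. Since each $f_{\ld,\mu}$ dominates the duality pairing, the equality $f(x,x^*)=\la x^*,x\ra$ holds precisely when $f_{\ld,\mu}(x,x^*)=\la x^*,x\ra$ for every index; that is, $L(f)=\bigcap_{\mathcal{I}}L(f_{\ld,\mu})=T_1\var T_2$. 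Hence $T_1\var T_2=L(f)$ with $f\in\mathcal{F}$, which by (\ref{defrep}) is exactly representability.

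I do not anticipate a genuine obstacle: once Theorem \ref{theo1}(1) is in hand, the argument is a short assembly. The only point demanding a moment's care is the maximal monotonicity of $T_{1,\ld_n}+T_{2,\mu_n}$ in the boundary case where one of $\ld_n,\mu_n$ vanishes, so that the corresponding summand is the original operator rather than a regularization; this is covered by the surviving full-domain Yosida term via the standard sum theorem. One should also note that the index family $\mathcal{I}$ may be uncountable, but the supremum construction is insensitive to the cardinality of the index set, so representability passes to the intersection without difficulty.
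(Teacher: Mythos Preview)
Your proposal is correct and follows essentially the same route as the paper: show each $T_{1,\lambda_n}+T_{2,\mu_n}$ is maximal monotone (one Yosida term has full domain), invoke Theorem~\ref{theo1}(1) for representability of each lower limit, and then pass to the intersection, with the extension part coming from~(\ref{garcia}). The only difference is cosmetic: you spell out the supremum-of-representing-functions argument for stability under intersection, whereas the paper simply asserts that intersections of representable operators are representable.
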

\begin{proof}
For every $\{(\lambda_n, \mu_n)\}\in \mathcal{I}$, the operators $T_n=T_{1, \lambda_n}+ T_{2,\mu_n}$,
$n\in\N$, are maximal monotone. Indeed, for every $n\in\N$, at least one of the parameters
$\lambda_n$ or $\mu_n$ is different from $0$, so at least one of the operators
$T_{1, \lambda_n}$ or $T_{2,\mu_n}$ is single-valued and maximal monotone,
hence their sum is maximal monotone.
On the other hand, by (\ref{garcia}),
$$
T_1+ T_2 \subset T_1 \var T_2 \subset \liminf\, (T_{1, \lambda_n} + T_{2, \mu_n}),
$$
hence $\Dom (\liminf T_n)\ne\emptyset$.
Therefore, according to Theorem \ref{theo1},
for every $\{(\lambda_n, \mu_n)\}\in \mathcal{I}$, the operator
$$
\liminf\,(T_{1, \lambda_n}+ T_{2,\mu_n})
$$
is representable.
Since the intersection of representable operators is a representable operator, we conclude
that $T_1 \var T_2$ is representable.
\end{proof}

%%%%%%%%%%%%%%%%%%%%%%%%%%%%%%%%%%%%%%%%%%%%%%%%%%%%%%%%%%

\noindent
{\it Example.}
(See Garc\'\i a-Lassonde-Revalski \cite[Example 3.11]{GLR06} and Garc\'\i a \cite[Example 3.13]{Gar09}.)
Let $X=\ell_2\times \ell_2$ and identify $X^*$ with $X$. Let $\Dom T:=D\times
D$ with
$$D:= \{\{x_n\} \subset \ell_2: \{2^nx_n\} \in \ell_2\},
$$
 and let $T: \Dom T\to X$ be defined by $$T(\{x_n\},
\{y_n\}):=(\{2^ny_n\}, -\{2^nx_n\}).$$ Consider $T_1:=T$ and $T_2:=-T$.
These operators are linear, anti-symmetric and maximal monotone with
common dense domain $\Dom T$.
Clearly, \textit{$T_1+T_2$ is not representable}, because $T_1+T_2$ is not closed
in $X\times X^*$, since $T_1+T_2\equiv 0$ with $\Dom(T_1+T_2)=\Dom T$ proper dense subset of $X$,
while \textit{$T_1 \var T_2$ is representable} (in fact maximal monotone),
since $(T_1 \var T_2)\equiv 0$ with $\Dom (T_1 \var T_2)=X$.

\section{Application 2: the variational composition}
\setcounter{equation}{0}
Let $X$ and $Y$ be reflexive Banach spaces, supplied
with strictly convex norms as well as their dual spaces,
let $T:X \rightrightarrows X^*$ be maximal monotone and let $A: Y \to X$ be
linear continuous with adjoint $A^*:X^*\to Y^*$.
The usual point-wise composition $A^*TA: Y \rightrightarrows Y^*$ is given by
$$
A^*TAy= \bigl\{y^* = A^*x^*:  x^* \in TAx\bigr\}.
$$
As in the case of the point-wise sum, this operator is monotone but in general
not even representable, whence the idea to consider different types of compositions.
Based on the same idea as the variational sum,
the so-called \textit{variational composition}, introduced by Pennanen-Revalski-Th\'era
\cite{PRT03}, is defined by:
$$
(A^*TA)_v = \bigcap_\mathcal{J}\liminf A^*T_{\lambda_n} A,
$$
where
$
\mathcal{J}=\Bigl\{ \{\lambda_n\}\subset \R: \lambda_n>0, \, \lambda_n\searrow  0 \Bigr\},
$
and $T_{\lambda_n}$ is the Yosida approximation of $T$.
[Again the original definition uses topological non-sequential limit, but
it is easily seen that both definitions are equivalent.]
\ps

It is well known that the point-wise composition can be expressed as a point-wise sum of operators.
More specifically, let $N_A, \widetilde{T} : Y \times X \rightrightarrows Y \times X$ be defined by
$$
\widetilde{T}(y,x):= \{0\} \times Tx, \quad \forall (y,x)\in Y \times X,\qquad N_A := \partial \delta_A,
$$
where $\delta_A$ is the indicator function of the graph of $A$, therefore,
$$
N_A(y,x)= \left\{
\begin{array}{ll}
 \bigl\{(A^*x^*, -x^*): x^* \in X^* \bigr\} & \mbox{ if } (y,x) \in A,\smallskip\\
\emptyset & \mbox{ otherwise. }
\end{array}
\right.
$$
Then
\begin{equation}\label{one0}
y^* \in A^*TAy \Leftrightarrow  (y^*, 0) \in (\widetilde{T}+N_A)(y,Ay).
\end{equation}
\ps
Similarly, it turns out that the variational composition can be expressed via an asymmetric variant of the variational
sum. Given maximal monotone operators $T_1, T_2:X \tos X^*$,
consider the following \textit{left variational sum}:
$$
l\sep(T_1 \var T_2):=\bigcap_\mathcal{J}\liminf\,(T_{1, \lambda_n}+ T_{2}).
$$
Clearly, this operator contains the variational sum,
\begin{equation}\label{one00}
T_1 \var T_2 \subset l\sep(T_1 \var T_2),
\end{equation}
% ici il faut dire quelque mots
and in general the inclusion is proper, as the following example shows.
\ps
\noindent
{\it Example.} Let $X = \R$, $T_1 = \partial\delta_{\{-1\}}$ and $T_2 = \partial\delta_{\{1\}}$.
We have $\Dom(T_1) \cap \Dom(T_2) = \emptyset$, and one verifies that
the usual and the variational sum are the trivial empty operator.
On the other hand, for $\ld > 0$ we have $T_{1, \lambda}(x) = (x + 1)/\ld$
%and $T_{2, \lambda}(x) = (x - 1)/\ld$,
for every $x\in \R$, and it can be checked that
$l\sep(T_1 \var T_2)=\{1\}\times \R$.
\ps
Here is the promised representation of the variational composition as a left variational sum:

\begin{prop}\label{prop4}
Let $X$ and $Y$ be Banach spaces.
Let  $T: X \rightrightarrows X^*$ be maximal monotone and let $A: Y \to X$ be linear
continuous. We have the following equivalence:
\begin{equation}\label{one}
y^* \in (A^*TA)_v(y) \Leftrightarrow (y^*, 0) \in l\sep(\widetilde{T}\var N_A)(y, Ay).
\end{equation}
\end{prop}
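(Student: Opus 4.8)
The plan is to reduce the claimed equivalence to a single-sequence statement and then intersect over $\mathcal{J}$. Fixing $\{\lambda_n\}\in\mathcal{J}$, I would prove
\[
 y^*\in\liminf_n A^*T_{\lambda_n}A\,(y)\quad\Longleftrightarrow\quad (y^*,0)\in\liminf_n(\widetilde{T}_{\lambda_n}+N_A)(y,Ay);
\]
taking intersections over all such sequences then yields $(A^*TA)_v(y)$ on the left and $l\sep(\widetilde{T}\var N_A)(y,Ay)$ on the right, which is exactly \eqref{one}. The whole argument rests on lifting the pointwise identity \eqref{one0}, applied to each maximal monotone operator $T_{\lambda_n}$, from single points to graph-lower-limits. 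Note that $\widetilde{T}$ (a product of maximal monotone operators) and $N_A=\partial\delta_A$ are both maximal monotone, so the left variational sum $l\sep(\widetilde{T}\var N_A)$ is well defined.

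The key preliminary step is to compute the Yosida regularization of $\widetilde{T}$ on the product space $Y\times X$, equipped with the $\ell_2$-product norm $\|(y,x)\|^2=\|y\|^2+\|x\|^2$, which is again strictly convex with strictly convex dual and whose duality mapping is the product of the respective duality mappings, $(y,x)\mapsto(J_Yy,J_Xx)$. Writing the inclusion defining the resolvent of $\widetilde{T}$ and using that the $Y$-component of $\widetilde{T}$ is $\{0\}$, the first coordinate forces the resolvent to leave $y$ unchanged, while the second coordinate reduces exactly to the inclusion defining $J^T_\lambda$ on $X$. Hence the resolvent of $\widetilde{T}$ is $(y,J^T_\lambda x)$, and the Yosida regularization factorizes as $\widetilde{T}_\lambda(y,x)=(0,T_\lambda x)=\widetilde{T_\lambda}(y,x)$. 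This factorization is what makes the two definitions talk to each other.

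With $\widetilde{T}_{\lambda_n}=\widetilde{T_{\lambda_n}}$ in hand, \eqref{one0} applied to the maximal monotone operator $T_{\lambda_n}$ gives, for each $n$, the pointwise equivalence $y^*\in A^*T_{\lambda_n}A\,z \Leftrightarrow (y^*,0)\in(\widetilde{T}_{\lambda_n}+N_A)(z,Az)$. For the forward direction of the single-sequence statement I would take an approximating sequence $(y_n,y_n^*)\in A^*T_{\lambda_n}A$ with $y_n\to y$ and $y_n^*\to y^*$; applying the pointwise equivalence at $y_n$ produces $\bigl((y_n,Ay_n),(y_n^*,0)\bigr)$ in the graph of $\widetilde{T}_{\lambda_n}+N_A$, and since $A$ is continuous, $(y_n,Ay_n)\to(y,Ay)$ and $(y_n^*,0)\to(y^*,0)$, giving the membership in the lower limit. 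The reverse direction is the delicate part: starting from $\bigl((u_n,v_n),(p_n,q_n)\bigr)$ in the graph of $\widetilde{T}_{\lambda_n}+N_A$ converging to $\bigl((y,Ay),(y^*,0)\bigr)$, the normal-cone factor forces $v_n=Au_n$ and produces $x_n^*\in X^*$ with $p_n=A^*x_n^*$ and $q_n=T_{\lambda_n}(Au_n)-x_n^*$. The candidate approximating pair for $A^*T_{\lambda_n}A$ is $(u_n,y_n^*)$ with $y_n^*:=A^*T_{\lambda_n}(Au_n)$; the point is the cancellation $y_n^*=A^*x_n^*+A^*q_n=p_n+A^*q_n$, so that $q_n\to 0$ and continuity of $A^*$ give $y_n^*\to y^*$, while $u_n\to y$, yielding $y^*\in\liminf_n A^*T_{\lambda_n}A\,(y)$.

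The main obstacle is precisely this reverse passage: one must recover an admissible approximating sequence for the composition from an arbitrary approximating sequence for the sum, and the only reason it works is the identity $A^*T_{\lambda_n}(Au_n)=p_n+A^*q_n$ together with the vanishing of the second coordinate $q_n$. Everything else — maximality of $\widetilde{T}$ and $N_A$, and the factorization of the duality mapping — is routine once the product norm is fixed. Intersecting the single-sequence equivalence over all $\{\lambda_n\}\in\mathcal{J}$ then closes the proof.
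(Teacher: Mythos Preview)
Your proposal is correct and follows essentially the same route as the paper. Both arguments begin with the identity $\widetilde{T}_\lambda(y,x)=(0,T_\lambda x)$, then for a fixed $\{\lambda_n\}\in\mathcal{J}$ pass from approximating sequences for $A^*T_{\lambda_n}A$ to approximating sequences for $\widetilde{T}_{\lambda_n}+N_A$ and back via the decomposition forced by $N_A$; the paper's key identity $y_n^*=A^*T_{\lambda_n}Ay_n-A^*x_n^*$ is exactly your cancellation $y_n^*=p_n+A^*q_n$ in different notation, and the intersection over $\mathcal{J}$ is handled identically.
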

\begin{proof}
Notice that for every $\ld>0$ and $(y,x)\in Y\times X$
we have $\widetilde{T}_{\lambda}(y,x)=\{0\}\times T_{\lambda}(x)$.
\ps
Let $(y, y^*) \in (A^*TA)_v$.  By definition,
for any $\{\lambda_n\} \in \mathcal{J}$, there exists
$\{y_n\} \subset X$ such that $(y_n, A^*T_{\lambda_n}Ay_n) \to (y, y^*)$.
Observe that
$
(A^*T_{\lambda_n}Ay_n,  - T_{\lambda_n}Ay_n)\in N_A(y_n,Ay_n),
$
hence
$$
(A^*T_{\lambda_n}Ay_n, 0)=  (0, T_{\lambda_n}Ay_n)+(A^*T_{\lambda_n}Ay_n, - T_{\lambda_n}Ay_n)
\in (\widetilde{T}_{\lambda_n}+N_A)(y_n,Ay_n).
$$
Since $(A^*T_{\lambda_n}Ay_n, 0) \to (y^*, 0)$ and $(y_n,Ay_n)\to (y,Ay)$, by definition of $\liminf$
we get
$$((y^*, 0),(y, Ay))\in \liminf\,(\widetilde{T}_{\lambda_n}+N_A).$$
Since this is valid for any $\{\lambda_n\} \in \mathcal{J}$,
we conclude that $(y^*, 0) \in l\sep(\widetilde{T}\var N_A)(y, Ay)$.
\ps
Conversely, let $(y^*, 0) \in l\sep(\widetilde{T}\var N_A)(y, Ay)$. Then,
for any $\{\lambda_n\} \in \mathcal{J}$, there exists
$\{(y_n,x_n)\} \subset Y\times X$ and $\{(y^*_n,x^*_n)\} \subset Y^*\times X^*$ with
$(y^*_n,x^*_n)\in (\widetilde{T}_{\lambda_n}+N_A)(y_n,x_n)$
such that $(y_n,x_n)\to (y,Ay)$ and $(y^*_n,x^*_n)\to (y^*, 0)$. From the definition of $N_A$,
we get $x_n=Ay_n$ and from
$$
(y^*_n,x^*_n)=(0,T_{\lambda_n}Ay_n)+(y^*_n, x^*_n-T_{\lambda_n}Ay_n)\in \widetilde{T}_{\lambda_n}+N_A
$$
we get
$y^*_n=A^*(T_{\lambda_n}Ay_n-x^*_n)=A^*T_{\lambda_n}Ay_n-A^*x^*_n$.
Thus,
$(y_n,y^*_n+A^*x^*_n)\in A^*T_{\lambda_n}A$ and since $(y_n,y^*_n+A^*x^*_n)\to (y,y^*)$, we have
$$(y,y^*)\in \liminf A^*T_{\lambda_n}A.$$
We conclude that $y^* \in (A^*TA)_v(y)$.
\end{proof}

We are now ready to prove the analog of Theorem \ref{varsum}.
\begin{theo}
Let $X$ and $Y$ be reflexive strictly convex with strictly convex duals.
Let  $T: X \rightrightarrows X^*$ be maximal monotone and let $A: Y \to X$ be linear
continuous.
Assume $\Dom A^*TA \ne\emptyset$.
Then, the variational composition $(A^*TA)_v$ is a representable extension
of the point-wise composition $A^*TA$.
\end{theo}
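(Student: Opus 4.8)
The plan is to follow the pattern of the proof of Theorem \ref{varsum}, using Proposition \ref{prop4} to transport the whole problem onto the product space $Y\times X$. I equip $Y\times X$ with the $\ell_2$-type product norm, so that it is again reflexive with strictly convex norm and strictly convex dual and the duality mapping splits as $J(y,x)=(J_Yy,J_Xx)$; this is precisely what yields the identity $\widetilde{T}_\lambda(y,x)=\{0\}\times T_\lambda x$ used in Proposition \ref{prop4}. There are two assertions to establish: that $(A^*TA)_v$ is representable, and that $A^*TA\subset(A^*TA)_v$.

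First I would prove that $l\sep(\widetilde{T}\var N_A)$ is representable. The operators $\widetilde{T}=0_Y\times T$ and $N_A=\partial\delta_A$ are maximal monotone on $Y\times X$, and for each $\lambda_n>0$ the Yosida regularization $\widetilde{T}_{\lambda_n}$ is single-valued, everywhere defined and continuous; hence the constraint qualification ${\rm int}\,\Dom \widetilde{T}_{\lambda_n}\cap \Dom N_A=(Y\times X)\cap A\ne\emptyset$ holds and $\widetilde{T}_{\lambda_n}+N_A$ is maximal monotone. By Theorem \ref{theo1}\,(1) each operator $\liminf_n(\widetilde{T}_{\lambda_n}+N_A)$ is therefore representable, and since an arbitrary intersection of representable operators is representable, so is
$$
l\sep(\widetilde{T}\var N_A)=\bigcap_{\mathcal{J}}\liminf_n(\widetilde{T}_{\lambda_n}+N_A).
$$

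Next I would transfer representability from $Y\times X$ back to $Y$. Write $l\sep(\widetilde{T}\var N_A)=L(f)$ with $f\in\mathcal{F}$ on $(Y\times X)\times(Y^*\times X^*)$, and set $g(y,y^*):=f\bigl((y,Ay),(y^*,0)\bigr)$. The map $\Phi(y,y^*)=\bigl((y,Ay),(y^*,0)\bigr)$ is affine and $s\times w^*$-to-$s\times w^*$ continuous, because $A$ is bounded (so $y\mapsto(y,Ay)$ is strongly continuous) and $y^*\mapsto(y^*,0)$ is weak-star continuous; consequently $g$ is convex and $s\times w^*$-lower semicontinuous, and $g(y,y^*)\ge\la(y^*,0),(y,Ay)\ra=\la y^*,y\ra$, so $g\in\mathcal{F}$. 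Since $\la(y^*,0),(y,Ay)\ra=\la y^*,y\ra$, Proposition \ref{prop4} rewrites as $(y,y^*)\in(A^*TA)_v\Leftrightarrow g(y,y^*)=\la y^*,y\ra$; that is, $(A^*TA)_v=L(g)$, whence $(A^*TA)_v$ is representable.

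Finally, for the inclusion $A^*TA\subset(A^*TA)_v$ I would chain the inclusions already at hand. Since $\Dom A^*TA\ne\emptyset$ forces $\Dom(\widetilde{T}+N_A)\ne\emptyset$, applying (\ref{garcia}) to the maximal monotone pair $\widetilde{T},N_A$ and then (\ref{one00}) gives $\widetilde{T}+N_A\subset\widetilde{T}\var N_A\subset l\sep(\widetilde{T}\var N_A)$. For $(y,y^*)\in A^*TA$, equivalence (\ref{one0}) yields $(y^*,0)\in(\widetilde{T}+N_A)(y,Ay)\subset l\sep(\widetilde{T}\var N_A)(y,Ay)$, and Proposition \ref{prop4} then returns $y^*\in(A^*TA)_v(y)$. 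The main obstacle I anticipate is the transfer step: one must verify that the affine substitution $\Phi$ respects the strong$\times$weak-star topology so that lower semicontinuity of the representing function survives, and must secure the maximal monotonicity of $\widetilde{T}_{\lambda_n}+N_A$, which is exactly where the everywhere-defined continuous nature of the Yosida regularization and the splitting of $J$ on the product space are genuinely used.
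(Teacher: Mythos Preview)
Your proof of the inclusion $A^*TA\subset(A^*TA)_v$ is exactly the paper's argument: go through (\ref{one0}), apply (\ref{garcia}) on $Y\times X$, then (\ref{one00}), and come back with Proposition~\ref{prop4}.

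For representability, however, you and the paper diverge. You first show that $l\sep(\widetilde{T}\var N_A)$ is representable on $Y\times X$ (via Theorem~\ref{theo1} applied to $\widetilde{T}_{\lambda_n}+N_A$), and then pull representability back to $Y$ by the affine substitution $\Phi(y,y^*)=((y,Ay),(y^*,0))$, observing that $\langle(y^*,0),(y,Ay)\rangle=\langle y^*,y\rangle$ so that $g=f\circ\Phi\in\mathcal{F}$ and $(A^*TA)_v=L(g)$ by Proposition~\ref{prop4}. This is correct; the transfer step works because $\Phi$ is $s\times w^*$-continuous and preserves the duality pairing. The paper takes a shorter path: it stays on $Y$ throughout, noting that each $A^*T_{\lambda_n}A$ is itself single-valued, everywhere defined, and maximal monotone (since $T_{\lambda_n}$ is), so Theorem~\ref{theo1} applies directly to the sequence $\{A^*T_{\lambda_n}A\}$ on $Y$, and $(A^*TA)_v$ is then an intersection of representable operators on $Y$. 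What the paper's route buys is economy: no product-space detour and no transfer lemma are needed, and Proposition~\ref{prop4} is used only for the inclusion, not for representability. What your route buys is a reusable observation---that representability survives precomposition with any affine map that respects the pairing---which is a small lemma in its own right.
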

\begin{proof}
First we show that $(A^*TA)_v$ contains $A^*TA$.
Let $(y, y^*) \in A^*TA$. By (\ref{one0}),
$(y^*, 0) \in (\widetilde{T}+N_A)(y,Ay)$.
Observe that $Y\times X$ supplied with the square norm on the product
is a reflexive Banach space with a strictly convex norm as well as its dual.
So we may apply (\ref{garcia})
with the maximal monotone mappings $\widetilde{T}$ and $N_A$ acting on
$Y\times X$ to obtain $(y^*, 0) \in (\widetilde{T}\var N_A)(y,Ay)$,
hence, by (\ref{one00}), $(y^*, 0) \in l\sep(\widetilde{T}\var N_A)(y,Ay)$.
This amounts to $(y, y^*) \in (A^*TA)_v$ according to Proposition \ref{prop4}.
\ps
Next, we show that $(A^*TA)_v$ is representable. This follows in the same way as for the variational sum
(Theorem \ref{varsum}). Indeed, for any $\ld_n > 0$ the operator $A^*T_{\ld_n}A : Y \to Y$ is single-valued
everywhere defined maximal monotone because so is $T_{\ld_n}$ (see, e.g., \cite{Sim08}).
Moreover $\Dom (\liminf A^*T_{\ld_n}A )\ne\emptyset$ because $\Dom A^*TA \ne\emptyset$
and $A^*TA \subset (A^*TA)_v\subset \liminf A^*T_{\ld_n}A$.
Hence, in view of Theorem \ref{theo1},
for every $\{\lambda_n\}\in \mathcal{J}$, the operator
$
\liminf\,A^*T_{\ld_n}A
$
is representable, so also is $(A^*TA)_v$ as intersection of such representable operators.
\end{proof}

\end{document}